\newtheorem{theorem}{Theorem}[section]
\newtheorem{lemma}[theorem]{Lemma}
\newtheorem{corollary}[theorem]{Corollary}
\theoremstyle{definition}
\newtheorem{example}[theorem]{Example}
\theoremstyle{remark}
\newtheorem{remark}[theorem]{Remark}
\begin{document}

\title[]{   A refinement of  Hardy Inequality via superquadratic function}
\author[Mohsen Kian, M. Rostamian Delavar]{  Mohsen Kian and M. Rostamian Delavar }

\address{${^{1,2}}$Department of Mathematics, Faculty of Basic Sciences, University of Bojnord, P. O. Box
1339, Bojnord 94531, Iran}
\email{\textcolor[rgb]{0.00,0.00,0.84}{kian@ub.ac.ir \ , \ kian@member.ams.org }} 
\email{\textcolor[rgb]{0.00,0.00,0.84}{m.rostamian@ub.ac.ir}}

\subjclass[2010]{Primary 47A63 ; Secondary 26D15 }

\keywords{Arithmetic-Geometric mean inequality, positive operator,  Positive linear mapping }

\begin{abstract}
A refinement of the Hardy inequality has been presented by use of superquadratic function.
\end{abstract}

\maketitle
\section{Introduction}

The classical Hardy inequality asserts that if $p>1$ and $f$ is a non-negative $p$-integrable function on $(0,\infty)$, then
 \begin{eqnarray}\label{n1}
 \int_{0}^{\infty}\left(\frac{1}{x}\int_{0}^{x}f(t)dt\right)^pdx\leq
  \left(\frac{p}{p-1}\right)^p\int_{0}^{\infty}f(x)^pdx.
 \end{eqnarray}

 Let $p>1$ and let $F:(0,\infty)\to\mathcal{B}(\mathcal{H})^+$  be a weakly measurable mapping such that
  $$\int_{0}^{\infty}F(x)^pdx\in\mathcal{B}(\mathcal{H}).$$
Assume that the real valued function $f$ is defined on $(0,\infty)$ by $f(t)=\langle F(t)\eta,\eta\rangle$. Then $f$ is $p$-integrable, since
\begin{align*}
  \int_{0}^{\infty}f(t)^pdt &= \int_{0}^{\infty}\langle F(t)\eta,\eta\rangle^pdt\\
  &\leq \int_{0}^{\infty}\langle F(t)^p\eta,\eta\rangle dt\qquad \mbox{by \eqref{MP}}\\
  &=\left\langle \int_{0}^{\infty}  F(t)^pdt\ \eta,\eta \right\rangle.
\end{align*}
The classical Hardy inequality \eqref{n1} now implies that
\begin{align}\label{q1}
  \int_{0}^{\infty}\left(\frac{1}{x}\int_{0}^{x}\langle F(t)\eta,\eta\rangle \ dt\right)^pdx\leq\left(\frac{p}{p-1}\right)^p
  \int_{0}^{\infty}\langle F(t)\eta,\eta\rangle^p\ dt.
\end{align}
In the case where $p\in(1,2]$, Hansen \cite{H} proved that  a stronger form of \eqref{q1}  holds true:
\begin{align}\label{hansen}
  \int_{0}^{\infty}\left(\frac{1}{x}\int_{0}^{x}F(t)dt\right)^pdx\leq\left(\frac{p}{p-1}\right)^p
  \int_{0}^{\infty}F(t)^pdt.
\end{align}
 However, if $p>2$, the inequality \eqref{hansen} is not valid in general, see \cite{H}. In this paper, utilizing the notion of the superquadratic functions, we give an improvement of the Hardy inequality \eqref{n1} for $p\geq2$. Furthermore, our result will provide some difference counterpart to Hardy inequality.

\section{Preliminaries}
Superquadratic functions have been introduced as a modification of convex functions in \cite{ajs}. A function $f:[0,\infty)\to\mathbb{R}$ is said to be superquadratic  whenever for all $a\geq0$ there exists a constant $C_a\in\mathbb{R}$  such that
  \begin{eqnarray}\label{def}
    f(b)\geq f(a)+C_a(b-a)+f(|b-a|)
  \end{eqnarray}
for all $b\geq0$. If such  $f$ is positive, then it is convex too and a sharper Jensen inequality holds true: For every probability measure $\mu$ on $\Omega$ and every $\mu$-integrable function $\varphi$ on $\Omega$, if $f$ is superquadratic, then
\begin{align}\label{p1}
f\left( \int_\Omega \varphi(t) d\mu(t)\right)\leq \int_\Omega f(\varphi(t)) d\mu(t)-\int_\Omega f\left(\left|\varphi(t)-\int_\Omega \varphi(s)d\mu(s)\right|\right)d\mu(t).
\end{align}

Assume that $\mathcal{B}(\mathcal{H})$ is the $C^*$-algebra of all bounded linear operators on a Hilbert space $\mathcal{H}$ and $I$ is the identity operator. An operator extension of \eqref{p1} has been presented in \cite{kian}: \\
\textbf{Theorem A.} If  $f:[0,\infty)\to\mathbb{R}$  is a continuous superquadratic  function, then
     \begin{align}\label{kian}
  f(\langle A\eta,\eta\rangle)\leq \langle f(A)\eta,\eta\rangle-\langle f(|A-\langle A\eta,\eta\rangle|)\eta,\eta\rangle
\end{align}
 for every positive operator $A$ and every unit vector $\eta  \in\mathcal{H}$.

 Theorem A provides a refinement of the well-known inequality (see \cite{mp})
 \begin{align}\label{MP}
   g(\langle A \eta,\eta\rangle)\leq \langle g(A)\eta,\eta\rangle,
 \end{align}
   which holds for every continuous convex function $g$. Moreover, a generalization of \eqref{kian}  has been shown in \cite{kd}:\\
\textbf{Theorem B.} Let $\Phi$ be a unital positive linear mapping on $\mathcal{B}(\mathcal{H})$.  If $f:[0,\infty)\to\mathbb{R}$  is a continuous superquadratic  function, then
  \begin{align}\label{main2}
     f(\langle \Phi(A)\eta,\eta\rangle)&\leq \langle \Phi(f(A))\eta,\eta\rangle
   -\left\langle \Phi\left(f\left(\left|A-\langle \Phi(A)\eta,\eta\rangle\right|\right)\right)\eta,\eta\right\rangle.
  \end{align}
for every  positive   operator $A$ and every unit vector $\eta\in\mathcal{H}$.

\section{Refinement of Hardy inequality}
\begin{lemma}\label{lm1}
 Let $p\geq 2$.   If the mappings $G:(0,\infty)\to\mathcal{B}(\mathcal{H})^+$ is   weakly measurable  such that
    $$\int_{0}^{\infty}G(x)^p\frac{dx}{x}\in\mathcal{B}(\mathcal{H}),$$
    then
{\small\begin{align*}
 \int_{0}^{\infty} \left( \frac{1}{x}\int_{0}^{x}\langle G(t)   \eta,\eta \rangle dt\right)^p \ \frac{dx}{x} &\leq
  \int_{0}^{\infty}\left\langle G(t)^p\eta,\eta \right\rangle\frac{dt}{t} \\
  &\quad-
 \int_{0}^{\infty} \frac{1}{x}\int_{0}^{x}\left\langle \left|G(t)  -\left\langle\frac{1}{x}\int_{0}^{x}G(s)ds\ \eta,\eta\right\rangle \right|^p \eta,\eta\right\rangle dt  \frac{dx}{x}.
 \end{align*}}
\end{lemma}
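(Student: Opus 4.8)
\noindent\emph{Proof idea.}\quad The plan is to deduce the inequality from the operator Jensen inequality for superquadratic functions (Theorem~B), applied for each fixed $x>0$ to the family $\{G(t)\}_{0<t<x}$ equipped with the normalized Lebesgue measure $dt/x$, and then to integrate the resulting inequality (valid for every $x$) against the measure $dx/x$ on $(0,\infty)$; a Tonelli interchange finally converts the iterated integral of $\langle G(t)^p\eta,\eta\rangle$ into $\int_0^\infty\langle G(t)^p\eta,\eta\rangle\,dt/t$.

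First I would record the classical fact that, for every $p\ge2$, the power function $f(u)=u^p$ is superquadratic on $[0,\infty)$: taking $C_a=pa^{p-1}$ in \eqref{def}, the required estimate $b^p\ge a^p+pa^{p-1}(b-a)+|b-a|^p$ holds for all $a,b\ge0$ precisely when $p\ge2$ (see \cite{ajs}), and $u^p$ is positive and continuous, so Theorem~B applies to it. Next I would establish the integral analogue of Theorem~B: for a probability measure $\nu$ on a set $\Omega$ and a weakly measurable family $t\mapsto G(t)$ of positive operators,
\begin{align*}
f\!\left(\left\langle\int_\Omega G(t)\,d\nu(t)\,\eta,\eta\right\rangle\right)\le\int_\Omega\langle f(G(t))\eta,\eta\rangle\,d\nu(t)-\int_\Omega\left\langle f\!\left(\left|G(t)-\left\langle\int_\Omega G(s)\,d\nu(s)\,\eta,\eta\right\rangle\right|\right)\eta,\eta\right\rangle d\nu(t)
\end{align*}
for every continuous superquadratic $f$ and every unit vector $\eta$. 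This follows from Theorem~B by a direct-integral dilation: on $\widetilde{\mathcal H}=\int^{\oplus}_\Omega\mathcal H\,d\nu$ let $A=\int^{\oplus}_\Omega G(t)\,d\nu(t)$ and let $\Phi(T)=V^*TV$, where $V\colon\mathcal H\to\widetilde{\mathcal H}$ sends $\zeta$ to the constant field $t\mapsto\zeta$; since $\nu(\Omega)=1$ we have $V^*V=I$, so $\Phi$ is unital positive linear, $\Phi(A)=\int_\Omega G\,d\nu$, $\Phi(f(A))=\int_\Omega f(G(t))\,d\nu(t)$ and $\Phi(f(|A-cI|))=\int_\Omega f(|G(t)-cI|)\,d\nu(t)$, so Theorem~B yields the displayed inequality. (Alternatively one can approximate the integrals by Riemann sums and apply Theorem~B to block-diagonal operators on $\mathcal H^{\oplus n}$ with the map $[T_{ij}]\mapsto\sum_i w_iT_{ii}$, then pass to the limit.)

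Now, for fixed $x>0$, I would apply the displayed inequality with $\Omega=(0,x)$, $d\nu(t)=dt/x$ and $f(u)=u^p$, using $\langle\frac1x\int_0^xG(t)\,dt\,\eta,\eta\rangle=\frac1x\int_0^x\langle G(t)\eta,\eta\rangle\,dt$, to obtain
\begin{align*}
\left(\frac1x\int_0^x\langle G(t)\eta,\eta\rangle\,dt\right)^p\le\frac1x\int_0^x\langle G(t)^p\eta,\eta\rangle\,dt-\frac1x\int_0^x\left\langle\left|G(t)-\left\langle\tfrac1x\int_0^xG(s)\,ds\,\eta,\eta\right\rangle\right|^p\eta,\eta\right\rangle dt.
\end{align*}
Multiplying by $1/x$ and integrating over $x\in(0,\infty)$ produces the left side and the subtracted term of the lemma verbatim, and for the remaining term the integrand is non-negative, so Tonelli's theorem together with $\int_t^\infty x^{-2}\,dx=1/t$ gives
\begin{align*}
\int_0^\infty\frac1x\left(\int_0^x\langle G(t)^p\eta,\eta\rangle\,dt\right)\frac{dx}{x}=\int_0^\infty\langle G(t)^p\eta,\eta\rangle\left(\int_t^\infty\frac{dx}{x^2}\right)dt=\int_0^\infty\langle G(t)^p\eta,\eta\rangle\,\frac{dt}{t},
\end{align*}
which is finite by the hypothesis $\int_0^\infty G(x)^p\,\frac{dx}{x}\in\mathcal B(\mathcal H)$; the subtracted term is then automatically finite as well, and the proof is complete.

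I expect the main obstacle to lie entirely in the second step, namely making the continuous, operator-valued form of Theorem~B rigorous: one must check that all the scalar functions appearing are measurable in $t$ (and jointly in $(t,x)$ for the subtracted term), using continuity of the functional calculus and weak measurability of $G$; that the decomposable operator $A$ and the operator integrals $\Phi(f(A))$, $\Phi(f(|A-cI|))$ are well defined and bounded under the stated integrability hypothesis; and that $\Phi$ is genuinely unital, positive and linear. Once this machinery is in place, the pointwise Jensen estimate in $x$ and the Tonelli computation are routine.
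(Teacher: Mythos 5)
Your proposal is correct and follows essentially the same route as the paper: superquadraticity of $u^p$ for $p\ge 2$, Theorem~B applied for each fixed $x$ to the averaging map $\Phi(F)=\frac1x\int_0^x F(t)\,dt$, then integration against $dx/x$ and a Tonelli interchange using $\int_t^\infty x^{-2}dx=1/t$. The only difference is cosmetic: where the paper simply regards $\Phi$ as a unital positive linear map on the algebra of weakly measurable operator-valued functions, you justify the same application of Theorem~B via a direct-integral (or Riemann-sum) dilation, which is a slightly more careful rendering of the identical step.
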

\begin{proof}
Let $\mathscr{A}$ be the $C^*$-algebra of all weakly measurable mappings $F:[0,x]\to\mathcal{B}(\mathcal{H})$ with  $\int_{0}^{x}F(t)dt\in\mathcal{B}(\mathcal{H})$.  Define the unital positive linear mapping $\Phi:\mathscr{A}\to\mathcal{B}(\mathcal{H})$ by
$$\Phi(F)=\frac{1}{x}\int_{0}^{x}F(t)dt,\qquad \forall F\in\mathscr{A}.$$
If $p\geq2$, then the function $f:(0,\infty)\to(0,\infty)$   defined by  $f(t)=t^p$ is superquadratic. If  $\eta\in\mathcal{H}$ is a unit vector, then Theorem B implies that
{\small\begin{align*}
  \left\langle \frac{1}{x}\int_{0}^{x}G(t)dt\ \eta,\eta\right\rangle^p&\leq
  \left\langle \frac{1}{x}\int_{0}^{x}G(t)^pdt\ \eta,\eta\right\rangle\\
  &\quad-
  \left\langle \frac{1}{x}\int_{0}^{x}\left|G(t)  -\left\langle\frac{1}{x}\int_{0}^{x}G(s)ds\ \eta,\eta\right\rangle \right|^pdt\ \eta,\eta\right\rangle
\end{align*}}
for every $x>0$. Multiplying both sides  by $\frac{1}{x}$ and integrating over $(0,\infty)$ we obtain
{\small\begin{align}\label{q2}
 \int_{0}^{\infty} \left\langle \frac{1}{x}\int_{0}^{x}G(t)dt\ \eta,\eta\right\rangle^p \ \frac{dx}{x} &\leq
  \int_{0}^{\infty}\left\langle \frac{1}{x}\int_{0}^{x}G(t)^pdt\ \eta,\eta\right\rangle \frac{dx}{x}\nonumber\\
  &\qquad-
 \int_{0}^{\infty} \left\langle \frac{1}{x}\int_{0}^{x}\left|G(t)  -\left\langle\frac{1}{x}\int_{0}^{x}G(s)ds\ \eta,\eta\right\rangle \right|^pdt\ \eta,\eta\right\rangle\ \frac{dx}{x}.
\end{align}}
 Noting  that
 {\small\begin{align*}
  \int_{0}^{\infty}\left\langle \frac{1}{x}\int_{0}^{x}G(t)^pdt\ \eta,\eta\right\rangle \frac{dx}{x} &=\int_{0}^{\infty} \frac{1}{x}\int_{0}^{x}\left\langle G(t)^p\eta,\eta \right\rangle dt\frac{dx}{x}\\
  &= \int_{0}^{\infty}\left\langle G(t)^p\eta,\eta \right\rangle \int_{t}^{\infty}\frac{1}{x^2}dx\ dt\\
  &= \int_{0}^{\infty}\left\langle G(t)^p\eta,\eta \right\rangle\frac{dt}{t},
\end{align*}}
 we get from  \eqref{q2}   that
 {\small\begin{align}\label{q3}
 \int_{0}^{\infty} \left\langle \frac{1}{x}\int_{0}^{x}G(t)dt\ \eta,\eta\right\rangle^p \ \frac{dx}{x} &\leq
  \int_{0}^{\infty}\left\langle G(t)^p\eta,\eta \right\rangle\frac{dt}{t}\nonumber\\
  &\quad-
 \int_{0}^{\infty} \frac{1}{x}\int_{0}^{x}\left\langle \left|G(t)  -\left\langle\frac{1}{x}\int_{0}^{x}G(s)ds\ \eta,\eta\right\rangle \right|^p \eta,\eta\right\rangle dt  \frac{dx}{x}.
\end{align}}

 \end{proof}
\begin{theorem}\label{th2}
 Let $  p\geq 2$. If the mapping $F :(0,\infty)\to\mathcal{B}(\mathcal{H})^+$ is  weakly measurable  such that
    $$\int_{0}^{\infty}F(x)^pdx\in\mathcal{B}(\mathcal{H}),$$
    then
{\small\begin{align*}
 \int_{0}^{\infty} &\left\langle\frac{1}{x}\int_{0}^{ x}F(t)dt \ \eta,\eta\right\rangle^p dx\\
  &\leq
 \left(\frac{p}{p-1} \right)^p\int_{0}^{\infty}\left\langle F(x)^p\ \eta,\eta\right\rangle\ dx \\
    & \ \ -\left(\frac{p}{p-1} \right)^{p-2}\left\langle   \int_{0}^{\infty} \frac{1}{x}\int_{0}^{x} x^{\frac{1}{p}}t^{\frac{-1}{p}} \left|x^{\frac{-1}{p}}t^{\frac{1}{p}}F(t) -\frac{p-1}{p}\left\langle\frac{1}{x}\int_{0}^{x}F(r)dr\ \eta,\eta\right\rangle \right|^p dt\  dx \ \eta,\eta \right\rangle.
\end{align*}}
for every unit vector $\eta\in\mathcal{H}$.
\end{theorem}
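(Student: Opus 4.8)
The plan is to realise the stated inequality, for each fixed $x>0$, as a single instance of Theorem~B for a cleverly weighted averaging map, and then to integrate the outcome in $x$. Fix $x>0$ and, as in the proof of Lemma~\ref{lm1}, let $\mathscr{A}$ be the $C^*$-algebra of weakly measurable mappings $A:[0,x]\to\mathcal{B}(\mathcal{H})$ with $\int_0^x A(t)\,dt\in\mathcal{B}(\mathcal{H})$. Define $\Phi_x:\mathscr{A}\to\mathcal{B}(\mathcal{H})$ by
\[
\Phi_x(A)=\frac{p-1}{p}\,x^{\frac{1-p}{p}}\int_0^x t^{-\frac1p}A(t)\,dt .
\]
Because $\frac{p-1}{p}\,x^{\frac{1-p}{p}}\int_0^x t^{-1/p}\,dt=1$, this $\Phi_x$ is a unital positive linear map, so Theorem~B applies to it. The device is to feed into Theorem~B not $F$ itself but the rescaled positive mapping $A_x(t)=x^{-1/p}t^{1/p}F(t)\in\mathscr{A}$, whose purpose is to make the factor $t^{1/p}$ it carries cancel the $t^{-1/p}$ in the weight of $\Phi_x$.

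Taking $f(t)=t^p$ --- which is continuous and superquadratic on $[0,\infty)$ precisely because $p\ge2$, as recalled in the proof of Lemma~\ref{lm1} --- Theorem~B gives, for every unit vector $\eta$,
\[
\langle \Phi_x(A_x)\eta,\eta\rangle^p\le \langle \Phi_x(A_x^p)\eta,\eta\rangle-\big\langle \Phi_x\big(\,|A_x-\langle\Phi_x(A_x)\eta,\eta\rangle I|^p\,\big)\eta,\eta\big\rangle .
\]
A direct computation then yields $\Phi_x(A_x)=\frac{p-1}{p}\cdot\frac1x\int_0^x F(t)\,dt$, so $\langle\Phi_x(A_x)\eta,\eta\rangle=\frac{p-1}{p}\langle\frac1x\int_0^x F(r)\,dr\,\eta,\eta\rangle$; likewise $\Phi_x(A_x^p)=\frac{p-1}{p}\,x^{\frac{1-2p}{p}}\int_0^x t^{\frac{p-1}{p}}F(t)^p\,dt$, and, after pairing with $\eta$, the subtracted term equals $\frac{p-1}{p}\,x^{\frac{1-p}{p}}\int_0^x t^{-1/p}\big\langle\,|x^{-1/p}t^{1/p}F(t)-\frac{p-1}{p}\langle\frac1x\int_0^x F(r)dr\,\eta,\eta\rangle|^p\eta,\eta\big\rangle\,dt$. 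Dividing the displayed inequality by $(\frac{p-1}{p})^p$ turns its left-hand side into $\langle\frac1x\int_0^x F(t)\,dt\,\eta,\eta\rangle^p$ and introduces a power of $\frac{p}{p-1}$ in front of each term on the right.

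It remains to integrate this pointwise (in $x$) inequality over $(0,\infty)$ with respect to $dx$. The left-hand side becomes exactly the left-hand side of the theorem. For the $\Phi_x(A_x^p)$-term one interchanges the order of integration (Tonelli, all integrands being positive); the inner integral $\int_t^\infty x^{\frac{1-2p}{p}}\,dx$ converges --- already for $p>1$ --- and equals $\frac{p}{p-1}\,t^{\frac{1-p}{p}}$, which exactly cancels the weight $t^{\frac{p-1}{p}}$ and, together with the constant already present, produces $\big(\frac{p}{p-1}\big)^p\int_0^\infty\langle F(x)^p\eta,\eta\rangle\,dx$. In the remaining term one writes $x^{\frac{1-p}{p}}=\frac1x\,x^{1/p}$ to reach the stated integrand. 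The principal obstacle is exactly this bookkeeping: the exponents in $\Phi_x$ and in $A_x$ must be chosen so that simultaneously $\Phi_x$ is unital, $\langle\Phi_x(A_x)\eta,\eta\rangle$ collapses to $\frac{p-1}{p}\langle\frac1x\int_0^x F\rangle$ (which is what fixes the precise argument of $|\cdot|^p$ in the remainder), and the $x$-integration of $\Phi_x(A_x^p)$ reconstitutes the sharp Hardy constant $(\frac{p}{p-1})^p$ --- and the final power of $\frac{p}{p-1}$ standing in front of the remainder term has to be read off with care. Alternatively, the same computation can be organised by first using the substitution $s=t^{(p-1)/p}$ to rewrite $\int_0^\infty\langle\frac1x\int_0^x F\,\eta,\eta\rangle^p\,dx$ as a multiple of $\int_0^\infty\langle\frac1y\int_0^y H\,\eta,\eta\rangle^p\frac{dy}{y}$ with $H(s)=s^{1/(p-1)}F(s^{p/(p-1)})$ and then invoking Lemma~\ref{lm1} directly; this routes the argument through the lemma but the core estimate is still the same application of Theorem~B. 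Finally, as in Lemma~\ref{lm1}, one checks that all the operator-valued integrals involved are genuinely $\mathcal{B}(\mathcal{H})$-valued, which follows from the hypothesis $\int_0^\infty F(x)^p\,dx\in\mathcal{B}(\mathcal{H})$.
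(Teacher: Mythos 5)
Your argument is sound and it is organized genuinely differently from the paper's. The paper routes everything through Lemma~\ref{lm1}: it applies that lemma to $G(t)=F(t^{p/(p-1)})t^{1/(p-1)}$ and then performs three substitutions ($y=t^{p/(p-1)}$, $r=s^{p/(p-1)}$, $z=x^{p/(p-1)}$) to convert the $dx/x$-weighted inequality into the stated one; your alternative sketch with $H(s)=s^{1/(p-1)}F(s^{p/(p-1)})$ is in fact exactly that proof. Your main route instead builds the Hardy weight into the unital positive map $\Phi_x(A)=\frac{p-1}{p}x^{(1-p)/p}\int_0^x t^{-1/p}A(t)\,dt$, applies Theorem~B once for each $x$ to $A_x(t)=x^{-1/p}t^{1/p}F(t)$, and finishes with Tonelli in $x$. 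This bypasses the substitution bookkeeping entirely, and your identities for $\Phi_x(I)$, $\Phi_x(A_x)$, $\Phi_x(A_x^p)$ and $\int_t^\infty x^{(1-2p)/p}\,dx=\frac{p}{p-1}\,t^{(1-p)/p}$ are all correct; what each approach buys is essentially presentation: the paper reuses Lemma~\ref{lm1} verbatim (Hansen's scheme), while yours makes the single application of Theorem~B and the recovery of the constant $(\frac{p}{p-1})^p$ completely transparent.

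The one point you left hanging (``the final power of $\frac{p}{p-1}$ \dots has to be read off with care'') should be settled explicitly, because it is where your outcome differs from the statement. Dividing the Theorem~B inequality by $(\frac{p-1}{p})^p$ puts the factor $(\frac{p}{p-1})^{p-1}$ in front of the subtracted term, and the $x$-integration adds no further constant there; so your route proves the inequality with remainder coefficient $(\frac{p}{p-1})^{p-1}$, not the $(\frac{p}{p-1})^{p-2}$ appearing in the theorem. This is not a defect: the remainder is nonnegative and $(\frac{p}{p-1})^{p-2}<(\frac{p}{p-1})^{p-1}$, so the stated inequality follows a fortiori --- but you need to say that one sentence rather than leave the exponent unspecified. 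In fact $(\frac{p}{p-1})^{p-1}$ is what the paper's own method gives when all factors are kept: in the paper's evaluation of $I$ and $II$ a factor $\frac{p-1}{p}$ is dropped in each (these cancel in the Hardy constant but not in the remainder coefficient), which is how $(\frac{p}{p-1})^{p-2}$ ends up in the statement. The paper's $p=2$ example corroborates your constant, since $\frac{\pi^2}{3}=4-2\bigl(2-\frac{\pi^2}{6}\bigr)$ is an exact identity with coefficient $2=(\frac{p}{p-1})^{p-1}$, as it must be by the variance identity underlying the superquadratic inequality at $p=2$.
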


\begin{proof}
 We use  Lemma \ref{lm1} and proceed as  argument applied in \cite[Theorem 2.3]{H}. Put
   $G(t)=F(t^{\frac{p}{p-1}})t^{\frac{1}{p-1}}$ so that    $G$ is weakly measurable. Applying Lemma  \ref{lm1} to $G$ we get
{\small\begin{align}\label{q4}
 \int_{0}^{\infty}& \left\langle \frac{1}{x}\int_{0}^{x}F(t^{\frac{p}{p-1}})t^{\frac{1}{p-1}}dt\ \eta,\eta\right\rangle^p \ \frac{dx}{x} \nonumber\\
 &\leq
  \int_{0}^{\infty}\left\langle F(t^{\frac{p}{p-1}})^pt^{\frac{p}{p-1}}\eta,\eta \right\rangle\frac{dt}{t}\nonumber\\
  &\quad-
 \int_{0}^{\infty} \frac{1}{x}\int_{0}^{x}\left\langle \left|F(t^{\frac{p}{p-1}})t^{\frac{1}{p-1}} -\left\langle\frac{1}{x}\int_{0}^{x}F(s^{\frac{p}{p-1}})s^{\frac{1}{p-1}}ds\ \eta,\eta\right\rangle \right|^p \eta,\eta\right\rangle dt  \frac{dx}{x}.
\end{align}}
Let we use the symbol $I\leq II-III$ for \eqref{q4}.  With substituting $y=t^{\frac{p}{p-1}}$ and $dy=\frac{p}{p-1}t^{\frac{1}{p-1}}dt$  we obtain
{\small\begin{align}\label{q12}
 I=\int_{0}^{\infty}& \left\langle \frac{1}{x}\int_{0}^{x}F(t^{\frac{p}{p-1}})t^{\frac{1}{p-1}}dt\ \eta,\eta\right\rangle^p \ \frac{dx}{x}
=\left(\frac{p-1}{p}\right)^p\int_{0}^{\infty}\left\langle\frac{1}{x}
              \int_{0}^{ x^{\frac{p}{p-1}}}F(y)dy \ \eta,\eta\right\rangle^p\frac{dx}{x}
\end{align}}
and
{\small\begin{align*}
 II= \int_{0}^{\infty}\left\langle F(t^{\frac{p}{p-1}})^pt^{\frac{p}{p-1}}\eta,\eta \right\rangle\frac{dt}{t}=\int_{0}^{\infty}\left\langle F(y)^p\ \eta,\eta\right\rangle\ dt.
\end{align*}}
Moreover, using substituting $r=s^{\frac{p}{p-1}}$ and $dr=\frac{p}{p-1}s^{\frac{1}{p-1}}ds$  we get {\small\begin{align}\label{q333}
III=\int_{0}^{\infty}& \frac{1}{x}\int_{0}^{x}\left\langle \left|F(t^{\frac{p}{p-1}})t^{\frac{1}{p-1}} -\left\langle\frac{1}{x}\int_{0}^{x}F(s^{\frac{p}{p-1}})s^{\frac{1}{p-1}}ds\ \eta,\eta\right\rangle \right|^p \eta,\eta\right\rangle dt  \frac{dx}{x}\nonumber\\
&=\frac{p-1}{p}\int_{0}^{\infty} \frac{1}{x}\int_{0}^{x^{\frac{p}{p-1}}}\left\langle \left|F(y)y^{\frac{1}{p}} -\frac{p-1}{p}\left\langle\frac{1}{x}\int_{0}^{x^{\frac{p}{p-1}}}F(r)dr\ \eta,\eta\right\rangle \right|^p \eta,\eta\right\rangle y^{\frac{-1}{p} }dy  \frac{dx}{x}.
\end{align}}
Applying the substitution $z=x^{\frac{p}{p-1}}$ and $\frac{dz}{z}=\frac{p}{p-1}\frac{dx}{x}$ in \eqref{q12} and \eqref{q333} respectively, we can write
{\small\begin{align}\label{q121}
 I=\left(\frac{p-1}{p}\right)^p\int_{0}^{\infty}\left\langle\frac{1}{x}
              \int_{0}^{ x^{\frac{p}{p-1}}}F(y)dy \ \eta,\eta\right\rangle^p\frac{dx}{x}=\left(\frac{p-1}{p}\right)^p\int_{0}^{\infty}
              \left\langle\frac{1}{z}
              \int_{0}^{ z}F(y)dy \ \eta,\eta\right\rangle^p dz,
\end{align}}
and
{\small\begin{align*}
III&=\int_{0}^{\infty} \frac{1}{x}\int_{0}^{x^{\frac{p}{p-1}}}\left\langle \left|F(y)y^{\frac{1}{p}} -\frac{p-1}{p}\left\langle\frac{1}{x}\int_{0}^{x^{\frac{p}{p-1}}}F(r)dr\ \eta,\eta\right\rangle \right|^p \eta,\eta\right\rangle y^{\frac{-1}{p} }dy  \frac{dx}{x}\\
&= \left(\frac{p-1}{p}\right)^2\int_{0}^{\infty} \frac{1}{z}\int_{0}^{z}\left\langle  z^{\frac{1}{p}}y^{\frac{-1}{p}} \left|z^{\frac{-1}{p}}y^{\frac{1}{p}}F(y) -\frac{p-1}{p}\left\langle\frac{1}{z}\int_{0}^{z}F(r)dr\ \eta,\eta\right\rangle \right|^p \eta,\eta\right\rangle  dy\  dz\\
&=\left(\frac{p-1}{p}\right)^2\left\langle   \int_{0}^{\infty} \frac{1}{z}\int_{0}^{z} z^{\frac{1}{p}}y^{\frac{-1}{p}} \left|z^{\frac{-1}{p}}y^{\frac{1}{p}}F(y) -\frac{p-1}{p}\left\langle\frac{1}{z}\int_{0}^{z}F(r)dr\ \eta,\eta\right\rangle \right|^p dy\  dz \ \eta,\eta \right\rangle.
\end{align*}}

\end{proof}
Assume that $f$ is a positive $p$-integrable $(p\geq2)$ function on $(0,\infty)$. Consider the mapping $F:(0,\infty)\to \mathcal{B}(\mathcal{H})^+$ defined by $F(t)=f(t) I$. Theorem \ref{th2} then gives the following refinement of the classical Hardy inequality \eqref{n1}.
\begin{corollary}\label{co1}
  If  $p\geq2$ and $f:(0,\infty)\to(0,\infty)$ is a $p$-integrable function, then
  {\small\begin{align*}
 \int_{0}^{\infty}& \left( \frac{1}{x}\int_{0}^{ x}f(t)dt \right)^p dx\\
  &\leq
 \left(\frac{p}{p-1} \right)^p\int_{0}^{\infty} f(x)^p  dx \\
    & \ \ -\left(\frac{p}{p-1} \right)^{p-2}   \int_{0}^{\infty} \frac{1}{x}\int_{0}^{x} x^{\frac{1}{p}}t^{\frac{-1}{p}} \left|x^{\frac{-1}{p}}t^{\frac{1}{p}}f(t) -\frac{p-1}{p} \frac{1}{x}\int_{0}^{x}f(r)dr  \right|^p dt\  dx.
\end{align*}}
\end{corollary}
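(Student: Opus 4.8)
The plan is to apply Theorem \ref{th2} to the operator-valued map $F(t)=f(t)I$ and to observe that, because $\eta$ is a unit vector and every operator occurring in Theorem \ref{th2} is then a scalar multiple of the identity, the operator inequality collapses term by term to the scalar inequality asserted here.

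First I would check the hypotheses. Since $f$ is non-negative and measurable, $t\mapsto F(t)=f(t)I$ takes values in $\mathcal{B}(\mathcal{H})^+$, and it is weakly measurable because $\langle F(t)\xi,\zeta\rangle=f(t)\langle\xi,\zeta\rangle$ is measurable in $t$ for all $\xi,\zeta\in\mathcal{H}$; moreover $\int_0^\infty F(x)^p\,dx=\left(\int_0^\infty f(x)^p\,dx\right)I\in\mathcal{B}(\mathcal{H})$ because $f$ is $p$-integrable. Hence Theorem \ref{th2} applies to this $F$ and to the given unit vector $\eta$.

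Next I would evaluate the three terms of Theorem \ref{th2} one at a time. Since a weak integral of $f(t)I$ equals $\left(\int f(t)\,dt\right)I$, we get $\left\langle\frac{1}{x}\int_0^x F(t)\,dt\ \eta,\eta\right\rangle=\frac{1}{x}\int_0^x f(t)\,dt$, and, using $\|\eta\|=1$, also $\langle F(x)^p\eta,\eta\rangle=f(x)^p$. Thus the left-hand side of Theorem \ref{th2} becomes $\int_0^\infty\left(\frac{1}{x}\int_0^x f(t)\,dt\right)^p dx$ and the first term on the right becomes $\left(\frac{p}{p-1}\right)^p\int_0^\infty f(x)^p\,dx$. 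For the correction term, using $F(t)=f(t)I$ together with the identity just noted, the operator inside the modulus equals $c(x,t)I$, where
\[
c(x,t)=x^{-1/p}t^{1/p}f(t)-\frac{p-1}{p}\cdot\frac{1}{x}\int_0^x f(r)\,dr
\]
is a real scalar. Since $|cI|=|c|\,I$ and hence $|cI|^p=|c|^pI$ by the functional calculus for multiples of the identity, the integrand $x^{1/p}t^{-1/p}|c(x,t)I|^p$ is the scalar $x^{1/p}t^{-1/p}|c(x,t)|^p$ times $I$; consequently the iterated integral in the correction term is a scalar multiple of $I$, and applying $\langle\cdot\,\eta,\eta\rangle$ turns the last term of Theorem \ref{th2} into
\[
\left(\frac{p}{p-1}\right)^{p-2}\int_0^\infty\frac{1}{x}\int_0^x x^{1/p}t^{-1/p}\left|x^{-1/p}t^{1/p}f(t)-\frac{p-1}{p}\cdot\frac{1}{x}\int_0^x f(r)\,dr\right|^p dt\,dx .
\]
Substituting these three evaluations into the inequality of Theorem \ref{th2} yields precisely the asserted refinement of the classical Hardy inequality \eqref{n1}.

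I do not expect any real obstacle: the argument is a direct specialization, and the only points needing (brief) justification are the elementary facts that $g(cI)=g(c)I$ and $|cI|=|c|I$ for real $c$, that $\langle(cI)\eta,\eta\rangle=c$ for a unit vector $\eta$, and that scalars may be pulled out of the (already present) weak and iterated integrals by linearity and Fubini.
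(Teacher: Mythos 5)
Your proposal is correct and follows exactly the paper's own route: the paper obtains Corollary \ref{co1} by setting $F(t)=f(t)I$ and invoking Theorem \ref{th2}, which is precisely your specialization. You simply spell out the routine verifications (weak measurability, $|cI|^p=|c|^pI$, $\langle cI\,\eta,\eta\rangle=c$ for a unit vector $\eta$) that the paper leaves implicit.
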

We give an example to show that Corollary \ref{co1} really gives an improvement of the classical Hardy inequality \eqref{n1}. The calculations in the next example has been done by the Mathematica software.
\begin{example}
Put $p=2$ and assume that $f(t)=\frac{1}{t+1}$ so that $f$ is square-integrable. Then
 $$\int_{0}^{\infty} \left( \frac{1}{x}\int_{0}^{ x}\frac{1}{t+1}dt \right)^2=\frac{\pi^2}{3},\qquad\int_{0}^{\infty}\frac{1}{(x+1)^2} dx=1,$$
 $$\int_{0}^{\infty} \frac{1}{x}\int_{0}^{x} x^{\frac{1}{2}}t^{\frac{-1}{2}} \left|\frac{x^{\frac{-1}{2}}t^{\frac{1}{2}}}{t+1} - \frac{1}{2x}\int_{0}^{x}\frac{1}{r+1}dr  \right|^2 dt\  dx=2-\frac{\pi^2}{6},$$
and Corollary \ref{co1} gives
$$\frac{\pi^2}{3}\leq 4-(2-\frac{\pi^2}{6}),$$
while the Hardy inequality \eqref{n1} gives $\frac{\pi^2}{3}\leq 4$.
\end{example}
\begin{remark}
   It will be helpful to point out that the power function $f(t)=-t^p$ is superquadratic for $1<p\leq 2$. A same argument as in Theorem \ref{th2} will provide a difference counterpart to the Hardy inequality. With assumption as in Theorem \ref{th2} except $1<p\leq 2$, we obtain
   \begin{align*}
  &\left(\frac{p}{p-1} \right)^p\int_{0}^{\infty}\left\langle F(x)^p\ \eta,\eta\right\rangle\ dx\\
  &\leq
  \int_{0}^{\infty} \left\langle\frac{1}{x}\int_{0}^{ x}F(t)dt \ \eta,\eta\right\rangle^p dx
  \\
    & \ \ +\left(\frac{p}{p-1} \right)^{p-2}\left\langle   \int_{0}^{\infty} \frac{1}{x}\int_{0}^{x} x^{\frac{1}{p}}t^{\frac{-1}{p}} \left|x^{\frac{-1}{p}}t^{\frac{1}{p}}F(t) -\frac{p-1}{p}\left\langle\frac{1}{x}\int_{0}^{x}F(r)dr\ \eta,\eta\right\rangle \right|^p dt\  dx \ \eta,\eta \right\rangle.
   \end{align*}
   \end{remark}

\section{External Jensen inequality for superquadratic functions}

\begin{theorem}
  Let $f:[0,\infty)\to\mathbb{R}$ be a superquadratic function. Let $x,y\in\mathcal{H}$ with $\|x\|^2-\|y\|^2=1$. If $A,B$ are two positive operators, then
{\small  \begin{align*}
   f(\langle Ax,x\rangle -\langle By,y\rangle)&\geq \|x\|^2f\left(\left\langle A \frac{x}{\|x\|},\frac{x}{\|x\|}\right\rangle\right)-\langle f(B)y,y\rangle+\langle f\left(\left|B-\frac{1}{\|y\|^2}\langle  By,y\rangle\right|\right)y,y\rangle\\
 &\ \ +f\left(\|y\|^2 \left|\frac{1}{ \|x\|^2}\langle Ax,x\rangle -\frac{1}{ \|y\|^2}\langle By,y\rangle \right|\right)\\
 & \ \ + \|y\|^2 f\left( \left|\frac{1}{ \|x\|^2}\langle Ax,x\rangle -\frac{1}{ \|y\|^2}\langle By,y\rangle \right|\right),
  \end{align*}}
  provided that
  $\langle Ax,x\rangle -\langle By,y\rangle\geq0$.
\end{theorem}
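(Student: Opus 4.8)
The plan is to split the asserted inequality into a purely scalar ``outer Jensen'' estimate for $f$ and a single application of Theorem~A to the operator $B$, and then to combine the two.

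First I would normalize. Put $s=\|x\|^{2}$ and $t=\|y\|^{2}$, so that $s-t=1$ and $s\ge 1>0$, and set
\[a=\frac{\langle Ax,x\rangle}{\|x\|^{2}}=\left\langle A\tfrac{x}{\|x\|},\tfrac{x}{\|x\|}\right\rangle,\qquad b=\frac{\langle By,y\rangle}{\|y\|^{2}},\qquad u=\langle Ax,x\rangle-\langle By,y\rangle=sa-tb.\]
Since $A,B\ge 0$ we have $a,b\ge 0$, and $u\ge 0$ is the standing hypothesis, so all three lie in the domain $[0,\infty)$ of $f$. (If $y=0$, then $x$ is a unit vector and the claim reduces to $f(\langle Ax,x\rangle)\ge f(\langle Ax,x\rangle)+f(0)$, which is true since $f(0)\le 0$ — put $b=a$ in \eqref{def}; so assume $t>0$.) Finally set $p=1/s$ and $q=t/s$; using only $s-t=1$ one checks $p,q\ge 0$, $p+q=1$ and $pu+qb=a$, and moreover $u-a=t(a-b)$ (since $u=a+t(a-b)$), so that $|u-a|=t|a-b|$, while $|b-a|=|a-b|$ trivially.

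Then I would carry out the scalar step. Writing the defining inequality \eqref{def} at the base point $a$, with its constant $C_{a}$, once at the point $u$ and once at the point $b$, then multiplying by $p$ and $q$ respectively and adding, the $C_{a}$-terms cancel because $p(u-a)+q(b-a)=(pu+qb)-a=0$; using $|u-a|=t|a-b|$, $|b-a|=|a-b|$ and multiplying through by $s$ gives
\[f(u)\ \ge\ s\,f(a)-t\,f(b)+f\!\left(t|a-b|\right)+t\,f\!\left(|a-b|\right).\]
(This is just the two-point case of \eqref{p1}, and obtaining it directly from \eqref{def} avoids any recourse to positivity of $f$.) Next I would apply Theorem~A to the positive operator $B$ with the unit vector $\eta=y/\|y\|$: since $\langle B\eta,\eta\rangle=b$, it gives $\|y\|^{2}f(b)\le\langle f(B)y,y\rangle-\langle f(|B-b|)y,y\rangle$, i.e.\ $-t\,f(b)\ge-\langle f(B)y,y\rangle+\langle f(|B-b|)y,y\rangle$. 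Substituting this into the displayed scalar inequality and translating $s,t,a,b,u$ and $|a-b|$ back into the original operator quantities reproduces exactly the assertion.

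The step I expect to need the most care is the elementary bookkeeping: confirming $p+q=1$, confirming $pu+qb=a$ (which is precisely where $\|x\|^{2}-\|y\|^{2}=1$ enters), and confirming that the distance terms collapse to $t|a-b|$ and $|a-b|$. These identities are exactly what make the $C_{a}$-terms vanish and what make the coefficients $\|x\|^{2}$ and $\|y\|^{2}$ appear with the right values; once they are settled, the remaining operator content is supplied entirely by Theorem~A and the proof closes.
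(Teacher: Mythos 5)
Your proposal is correct and follows essentially the same route as the paper: you decompose $\left\langle A\frac{x}{\|x\|},\frac{x}{\|x\|}\right\rangle$ as the convex combination of $\langle Ax,x\rangle-\langle By,y\rangle$ and $\left\langle B\frac{y}{\|y\|},\frac{y}{\|y\|}\right\rangle$ with weights $\frac{1}{\|x\|^2}$ and $\frac{\|y\|^2}{\|x\|^2}$, apply the two-point superquadratic Jensen estimate, and then use Theorem~A for $B$ with the unit vector $y/\|y\|$, exactly as in the paper's argument. The only (harmless) differences are that you derive the two-point Jensen step directly from \eqref{def} instead of quoting \eqref{p1}, and that you note the degenerate case $y=0$.
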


\begin{align}\label{qq1}
  f\left(\left\langle A \frac{x}{\|x\|},\frac{x}{\|x\|}\right\rangle\right)&=
  f\left(\frac{1}{ \|x\|^2}(\langle Ax,x\rangle -\langle By,y\rangle)+\frac{\|y\|^2}{ \|x\|^2}\left\langle B\frac{y}{\|y\|},\frac{y}{\|y\|}\right\rangle\right)\nonumber\\
  &\leq \frac{1}{ \|x\|^2}f(\langle Ax,x\rangle -\langle By,y\rangle)+\frac{\|y\|^2}{ \|x\|^2}f\left(\left\langle B\frac{y}{\|y\|},\frac{y}{\|y\|}\right\rangle\right)\\
  &\ \ \ -\frac{1}{ \|x\|^2}
  f\left(\frac{\|y\|^2}{ \|x\|^2}\left|\langle Ax,x\rangle -\langle By,y\rangle-\frac{1}{\|y\|^2}\langle By,y\rangle\right|\right)\nonumber\\
  & \ \ \ -
\frac{\|y\|^2}{ \|x\|^2}
  f\left(\frac{1}{ \|x\|^2}\left|\langle Ax,x\rangle -\langle By,y\rangle-\frac{1}{\|y\|^2}\langle By,y\rangle\right|\right)\nonumber.
\end{align}
Since $f$ is superquadratic, it follows from \eqref{kian} that
\begin{align}\label{qq2}
  f\left(\left\langle B\frac{y}{\|y\|},\frac{y}{\|y\|}\right\rangle\right)\leq \frac{1}{\|y\|^2}\langle f(B)y,y\rangle - \frac{1}{\|y\|^2}\langle f\left(\left|B-\frac{1}{\|y\|^2}\langle  By,y\rangle\right|\right)y,y\rangle
\end{align}

Multiplying both sides of  \eqref{qq1}  by $\|x\|^2$ and using \eqref{qq2}  we get
\begin{align*}
 \|x\|^2f\left(\left\langle A \frac{x}{\|x\|},\frac{x}{\|x\|}\right\rangle\right)&\leq f(\langle Ax,x\rangle -\langle By,y\rangle)+\langle f(B)y,y\rangle-\langle f\left(\left|B-\frac{1}{\|y\|^2}\langle  By,y\rangle\right|\right)y,y\rangle\\
 &\ \ -f\left(\|y\|^2 \left|\frac{1}{ \|x\|^2}\langle Ax,x\rangle -\frac{1}{ \|y\|^2}\langle By,y\rangle \right|\right)\\
 & \ \ - \|y\|^2 f\left( \left|\frac{1}{ \|x\|^2}\langle Ax,x\rangle -\frac{1}{ \|y\|^2}\langle By,y\rangle \right|\right),
\end{align*}
which concludes the result.


\end{document}